\documentclass[UTF-8,reqno]{amsart}
\usepackage{enumerate, bbm}
\usepackage{amssymb,url,color, booktabs}
\usepackage{mathrsfs}
\usepackage{soul,cancel}
\usepackage{color}
\usepackage[colorlinks=true]{hyperref}
\hypersetup{
    linkcolor=blue,          
    citecolor=red,        
    filecolor=blue,      
    urlcolor=cyan
}
\definecolor{MyDarkBlue}{cmyk}{0.8,0.3,0.8,0.4}
\definecolor{yellow}{rgb}{0.99,0.99,0.70}
\definecolor{white}{rgb}{1.0,1.0,1.0}
\definecolor{black}{rgb}{0.00,0.00,0.00}
\definecolor{backgroundcolor}{RGB}{199,238,206}

\numberwithin{equation}{section}

\newcommand{\be}{\begin{eqnarray}}
\newcommand{\ee}{\end{eqnarray}}
\newcommand{\ce}{\begin{eqnarray*}}
\newcommand{\de}{\end{eqnarray*}}
\newtheorem{theorem}{Theorem}[section]
\newtheorem{lemma}[theorem]{Lemma}
\newtheorem{remark}[theorem]{Remark}
\newtheorem{definition}[theorem]{Definition}
\newtheorem{proposition}[theorem]{Proposition}
\newtheorem{Examples}[theorem]{Example}
\newtheorem{corollary}[theorem]{Corollary}

\def\nor{|\mspace{-3mu}|\mspace{-3mu}|}

\def\eps{\varepsilon}
\def\e{\mathrm{e}}
\def\supp{\mathrm{supp}}

\def\dif{{\mathord{{\rm d}}}}

\def\max{{\mathord{{\rm max}}}}

\def\bba{{\boldsymbol{a}}}

\def\bbk{{\boldsymbol{k}}}

\def\bbp{{\boldsymbol{p}}}
\def\bbr{{\boldsymbol{r}}}

\def\bbone{{\boldsymbol{1}}}
\def\bb2{{\boldsymbol{2}}}
\def\no{\nonumber}
\def\={&\!\!=\!\!&}

\def\bx{{\mathbf{x}}}

\def\bB{{\mathbf B}}

\def\b1{{\mathbbm 1}}

\def\cA{{\mathcal A}}

\def\cD{{\mathcal D}}

\def\cG{{\mathcal G}}

\def\cI{{\mathcal I}}

\def\cR{{\mathcal R}}

\def\mE{{\mathbb E}}

\def\mL{{\mathbb L}}

\def\mN{{\mathbb N}}

\def\mR{{\mathbb R}}

\def\sI{{\mathscr I}}

\def\sS{{\mathscr S}}

\def\sV{{\mathscr V}}

\def\geq{\geqslant}
\def\leq{\leqslant}
\def\<{{\langle}}
\def\>{{\rangle}}
\def\({{\big(}}
\def\){{\big)}}
\def\[{{\Big[}}
\def\]{{\Big]}}

\def\b{\beta}
\def\de{\delta}
\def\g{\gamma}

\def\k{\kappa}
\def\l{\lambda}

\def\s{\sigma}

\def\ff{\frac}
\def\nn{\nabla}

\def\p{\partial}
\def\div{\mathord{{\rm div}}}

\def\bt{\begin{theorem}}
\def\et{\end{theorem}}
\def\bl{\begin{lemma}}
\def\el{\end{lemma}}
\def\br{\begin{remark}}
\def\er{\end{remark}}
\def\bpf{\begin{proof}}
\def\epf{\end{proof}}
\def\bx{\begin{Examples}}
\def\ex{\end{Examples}}
\def\bd{\begin{definition}}
\def\ed{\end{definition}}
\def\bp{\begin{proposition}}
\def\ep{\end{proposition}}
\def\bc{\begin{corollary}}
\def\ec{\end{corollary}}

\def\wt{\widetilde}

\allowdisplaybreaks

\begin{document}

\title{Kinetic Fokker--Planck Equations with Drifts in a Supercritical Range}
\author[Z. Chen]{Zikai Chen}
\author[C. Ren]{Chongyang Ren}

\thanks{Zikai Chen: School of Mathematics and Statistics, Wuhan University, Wuhan, Hubei, 430072, China, and Department of Mathematics Sciences, Graduate School of Science, Kyoto University, Kyoto, 606-8502, Japan. E-mail: chenzikai@whu.edu.cn}

\thanks{Chongyang Ren: School of Mathematical Sciences, University of Science and Technology of China, Hefei, Anhui 230026, P. R. China, Email: rcy.math@ustc.edu.cn}

\thanks{The first author acknowledges the support from China Scholarship Council (No. 202506270058). The second author was supported by China Postdoctoral Science Foundation (No. 2026M793409).}

\begin{abstract}
We investigate kinetic Fokker--Planck equations with rough divergence-free drifts in anisotropic mixed Lebesgue spaces. The divergence-free structure yields an energy cancellation that remains effective under kinetic localization. Combining this cancellation with anisotropic regularization estimates and a De Giorgi iteration, we establish local boundedness for weak subsolutions and, consequently, global well-posedness for the Cauchy problem in a scaling-supercritical regime.

\end{abstract}

\keywords{kinetic Fokker--Planck equations; divergence-free structure; scaling supercriticality; De Giorgi iteration; global well-posedness.}

\maketitle
\section{Introduction}
In this paper, we consider the following kinetic Fokker--Planck equation in $\mR^{2d}$:
\begin{align}\label{eqkinetic}
\p_tu=\Delta_vu+v\cdot\nn_xu+b\cdot\nn_vu+f.
\end{align}
Here $b=b(t,x,v)$ is a rough drift that is divergence-free in the velocity variable, and $f=f(t,x,v)$ is an inhomogeneous term. Our main objective is to establish local boundedness and global well-posedness of the equation \eqref{eqkinetic} in a regime in which the drift is supercritical.

 The coupling between velocity diffusion and spatial transport gives the equation its characteristic anisotropic structure and scaling. For $\lambda>0$, define 
$$u_\lambda(t,x,v):=u(\lambda^2t,\lambda^3x,\lambda v),\quad b_\lambda(t,x,v):=\lambda b(\lambda^2t,\lambda^3x,\lambda v),\quad f_\lambda(t,x,v):=\lambda^2 f(\lambda^2t,\lambda^3x,\lambda v).$$
Then $u_\lambda$ solves the same type of equation with $b$ and $f$ replaced by $b_\lambda$ and $f_\lambda$, namely
$$\p_t u_\lambda=\Delta_vu_\lambda+v\cdot\nn_xu_\lambda+b_\lambda\cdot\nn_vu_\lambda+f_\lambda.$$
For $\bba=(3,1)$, $\bbp_1=(p_{1,x},p_{1,v})\in(1,\infty)^2$ and $q_1\in(1,\infty)$, set 
\begin{align*}
S_1:=\bba\cdot\frac d{\bbp_1}:=\frac{3d}{p_{1,x}}+\frac d{p_{1,v}},\ \ \ \
T_1:=\frac2{q_1}. 
\end{align*}
By a change of variables,
$$\|b_\lambda\|_{\mL_t^{q_1}\mL_z^{\bbp_1}}=\lambda^{1-S_1-T_1}\|b\|_{\mL_t^{q_1}\mL_z^{\bbp_1}}.$$
See \eqref{norm} below for the definition of $L^\bbp$-space. Letting $\l\rightarrow0$, we shall categorize the following three cases:
$$\textbf{Subcritical: }S_1+T_1<1;\quad\textbf{Critical: }S_1+T_1=1;\quad\textbf{Supercritical: }S_1+T_1>1.$$
In the supercritical case $S_1+T_1>1$, the norm of $b_\lambda$ blows up as $\lambda\downarrow0$, showing that the drift becomes singular at smaller scales and rescaling cannot reduce the problem to a perturbative regime with a small drift. In particular, the standard strategy based on scaling and perturbation breaks down in the supercritical range. Establishing a priori estimates and global well-posedness in this case is therefore rather difficult.

\subsection{Background and motivation}

Kinetic Fokker--Planck--Kolmogorov equations constitute a basic class of degenerate parabolic equations arising in kinetic theory and stochastic dynamics. Their prototype goes back to Kolmogorov \cite{Kol34}, while the underlying hypoelliptic mechanism is covered by the H\"ormander theory \cite{Hor67}. Although diffusion acts only in the velocity variable, the commutator relation $[\p_{v_i},v\cdot\nn_x]=\p_{x_i}$ transfers regularity from $v$ to $x$. Consequently, the natural geometry and regularity scales are anisotropic, with the position and velocity variables carrying different weights; see, for instance, \cite{Bouchut02} for fundamental hypoelliptic regularity estimates in this setting.

For weak solutions with merely measurable or rough coefficients, the classical uniformly parabolic theory cannot be applied directly. Pascucci and Polidoro \cite{PP04} adapted the Moser iteration to obtain local boundedness for a class of ultraparabolic equations, while Wang and Zhang \cite{WZ11} established H\"older regularity of weak solutions. Moser estimates for degenerate Kolmogorov operators with rough lower-order coefficients satisfying nonnegative divergence conditions were obtained in \cite{APR19}. A kinetic De Giorgi theory was subsequently developed by Golse, Imbert, Mouhot and Vasseur \cite{GIMV19}, leading in particular to Harnack inequalities for kinetic Fokker--Planck equations with rough coefficients. Further quantitative and weak Harnack estimates were obtained in \cite{GM22,GI23}. Related regularity and well-posedness problems for degenerate stochastic equations with low-regularity drifts have also been studied in \cite{CdR17,ZZ24,RZ25}. These works provide the main analytic framework for treating kinetic equations beyond the classical smooth-coefficient setting.

Divergence-free drifts provide a basic example of a mechanism not captured by scaling alone. For nondegenerate drift--diffusion equations, the resulting cancellation in the local energy inequality permits local boundedness and related regularity estimates for rough drifts in ranges extending beyond the usual scaling-critical threshold; see, for example, \cite{AD23}. Related structural and energy-based approaches have recently yielded weak well-posedness results for nondegenerate SDEs with critical and supercritical distributional drifts \cite{GP24,HZ25}.

For kinetic equations with singular coefficients in mixed Lebesgue spaces, Zhang \cite{Zhang25} established local boundedness under singular drifts and distribution-valued inhomogeneous terms. The proof combines regularization estimates for the Kolmogorov semigroup in anisotropic Besov spaces with a De Giorgi iteration. These results, however, are confined to scaling-subcritical regimes. This raises the question of whether additional structural properties of the drift can restore a priori control in the scaling-supercritical regime. The purpose of the present paper is to give a positive answer under the velocity-divergence-free condition. More precisely, we show that the cancellation generated by ${\div}_v b=0$, when combined with anisotropic kinetic smoothing and De Giorgi iteration, yields local boundedness and global weak well-posedness in a nonempty scaling-supercritical range.

\subsection{Main result}

For $i=0,1$, let $q_i\in(1,\infty)$ and $\bbp_i=(p_{i,x},p_{i,v})\in(1,\infty)^2$. We further assume that $q_1>2$ and $\bbp_1\in(2,\infty)^2$. The indices $i=0$ and $i=1$ correspond to the inhomogeneous term $f$ and the drift $b$, respectively. Set 
\begin{align*}
S_i:=\bba\cdot\frac d{\bbp_i}=\frac{3d}{p_{i,x}}+\frac d{p_{i,v}},
\ \ \ \
T_i:=\frac2{q_i}.
\end{align*} 
The basic assumptions are as follows:

\noindent{\bf (${\text{H}}$)}
Assume that $b:\mR_+\times\mR^{2d}\to\mR^d$ and $f:\mR_+\times\mR^{2d}\to\mR$ satisfy $\div_vb=0$ in the distributional sense and

$$b\in L_{\rm loc}^{q_1}\big(\mR_+;L^{\bbp_1}(\mR^{2d})\big),\quad f\in L_{\rm loc}^{q_0}\big(\mR_+;L^{\bbp_0}(\mR^{2d})\big),$$
and that there exists $\b\in(0,1)$ such that
\begin{align}\label{intro-parameter-condition}
\max\Big\{\frac{S_1}{2-T_1},\frac{S_0}{2-T_0}\Big\}<\b<2-S_1-T_1.
\end{align}

\begin{remark}\label{rmk:nonempty-supercritical}
The condition \eqref{intro-parameter-condition} allows for scaling-supercritical drifts. Indeed,
$\frac{(2-T_1)^2}{3-T_1}-(1-T_1)=\frac{1}{3-T_1}>0,$
so one may choose $S_1$ such that
\begin{align*}
1-T_1<S_1<\frac{(2-T_1)^2}{3-T_1}.
\end{align*}
Then $S_1+T_1>1$ and
$\frac{S_1}{2-T_1}<2-S_1-T_1.$
Thus, if in addition $\frac{S_0}{2-T_0}<2-S_1-T_1$, there exists $\beta\in(0,1)$ satisfying \eqref{intro-parameter-condition}.
\end{remark}
\begin{remark}
The velocity-divergence-free condition changes the structure of the localized energy inequality. For a general drift, the drift contribution couples $b$, $u$ and $\nabla_vu$, and its control leads to the usual scaling-subcritical restriction. When $\div_v b=0$, the derivative can instead be transferred entirely to the cutoff, so that the drift enters the localized energy estimate only through a lower-order term. This cancellation is compatible with truncation and kinetic localization and allows the energy and interpolation estimates to close beyond the scaling-critical threshold.
\end{remark}

We can now state a simplified version of the main theorem; the precise uniformly local energy estimate is given in Theorem \ref{thm:df-global}.

\begin{theorem}\label{thm:intro-global}
Under Assumption {\bf (${\text{H}}$)}, the Cauchy problem
$$\p_tu=\Delta_vu+v\cdot\nn_xu+b\cdot\nn_vu+f,\quad u|_{t\leq0}=0,$$
admits a unique global weak solution in the sense of Definition \ref{weaksubsol}, within the class of solutions which are bounded and have finite uniformly local kinetic energy on every finite time interval. Moreover, for every $T>0$ and $t\in[0,T]$,
$$\|u\|_{\mL^\infty((0,t)\times\mR^{2d})}\lesssim_{C_T}\|f\|_{\mL_t^{q_0}\mL_z^{\bbp_0}(0,t)},$$
where $C_T$ depends only on $T,d,q_1,\bbp_1,q_0,\bbp_0,\b$ and $\|b\|_{L^{q_1}((0,T+4);L^{\bbp_1}(\mR^{2d}))}$.
\end{theorem}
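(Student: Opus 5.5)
The plan has three parts: an a priori local boundedness estimate, existence by smooth approximation, and uniqueness by a duality/energy argument. All three use the cancellation coming from $\div_v b=0$.

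\textbf{Localized energy inequality.} First I would prove a localized energy (Caccioppoli) inequality for weak subsolutions on kinetic cylinders $Q_r=\{(t,x,v): -r^2<t\leq 0,\ |x-tv|<r^3,\ |v|<r\}$ (suitably translated). Take a cutoff $\eta$ adapted to the kinetic geometry and test the equation for $(u-k)_+$ against $(u-k)_+\eta^2$. Since $\div_v b=0$, the drift term becomes
$$\int b\cdot\nabla_v\tfrac12(u-k)_+^2\,\eta^2=-\int (u-k)_+^2\, b\cdot\eta\nabla_v\eta .$$
No gradient of $u$ survives. Hölder's inequality then bounds this term by $\|b\|_{L^{q_1}_tL^{\bbp_1}_z}\,\|(u-k)_+^2\|_{L^{q_1'}_tL^{\bbp_1'}_z}$.

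\textbf{Gain of integrability and De Giorgi iteration.} The energy controls $\sup_t\|(u-k)_+\eta\|_{L^2}$ and $\|\nabla_v((u-k)_+\eta)\|_{L^2}$. To gain integrability beyond $L^2$, I would use the anisotropic regularization of the Kolmogorov semigroup in Besov spaces, as in \cite{Zhang25}. Writing $w=(u-k)_+\eta$ in Duhamel form shows that $w$ lies in an anisotropic space $\mL^{\bbr}$ with some gain $\beta$ of derivatives in the anisotropic scaling. Interpolating with the energy bound should yield a bound for $w$ in $\mL^{q}_t\mL^{\bbp}_z$, with exponent controlled by $\beta$. The lower bound in \eqref{intro-parameter-condition} is what makes the $f$-term and the $b$-term bounded by a power $>1$ of the previous level's energy. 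The upper bound $\beta<2-S_1-T_1$ is what makes $(u-k)_+^2$ integrable against $b$ after this gain.

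Then I would run De Giorgi along levels $k_j=K(1-2^{-j})$ and shrinking cylinders. The aim is a recursion $E_{j+1}\leq C^jK^{-\epsilon}E_j^{1+\delta}$, which gives
$$\|u_+\|_{L^\infty(Q_{1/2})}\lesssim \|u\|_{L^2(Q_1)}+\|f\|_{L^{q_0}_tL^{\bbp_0}_z}.$$
The constant depends on $\|b\|$ in a non-scale-invariant way. This is harmless because we work only at unit scale, with a uniformly local energy.

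\textbf{Main obstacle.} I expect the hardest step to be this nonlinear gain-of-integrability estimate. The drift term is quadratic in $(u-k)_+$ and, being supercritical, cannot be absorbed by smallness. It must instead be controlled by a superlinear power of the energy, via the exponent bookkeeping that matches $\beta$ to $S_1,T_1$. A further difficulty is that commuting the cutoff with $v\cdot\nabla_x$ produces the term $(v\cdot\nabla_x\eta)$, and this term must also be handled in the Besov regularization.

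\textbf{Existence and uniqueness.} For existence, mollify $b$ and $f$ while keeping $\div_v b_n=0$, and solve the smooth problems. The uniformly local energy estimate (Theorem \ref{thm:df-global}), obtained by summing localized energies with translated weights and applying Gronwall, together with the $L^\infty$ bound, is uniform in $n$. Passing to weak limits gives a solution; the drift term converges because $b_n\to b$ strongly in $L^{q_1}_{\rm loc}L^{\bbp_1}$ while $u_n\to u$ weakly-* in $L^\infty$ and $u_n\to u$ strongly in $L^2_{\rm loc}$, the latter by kinetic averaging compactness. For uniqueness, the difference $w$ of two bounded solutions with finite uniformly local energy solves the homogeneous equation. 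Its weighted energy satisfies the same cancellation, since $b\cdot\nabla_v w\, w$ integrates to a cutoff term bounded using $w\in L^\infty$. Gronwall then gives $w\equiv0$. Finally, the stated bound $\|u\|_{L^\infty}\lesssim_{C_T}\|f\|$ comes from combining the local boundedness estimate with the energy estimate on $(0,t)$.
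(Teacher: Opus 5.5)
Your overall architecture (cancellation from $\mathrm{div}_vb=0$, Besov gain via Duhamel, interpolation, De Giorgi, Galilean localization, approximation) matches the paper. However, there is a genuine gap at the step you single out: the gain of integrability for $w=(u-k)_+\eta$. Two things are missing.

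First, $(u-k)_+\eta$ is only a subsolution. Duhamel's formula gives at most a one-sided bound for it, not a Besov bound on $w$ itself that you could interpolate against its energy. Second, you never say how $b\cdot\nabla_v w$ is treated in the Duhamel step, and this is exactly where supercriticality bites. It cannot be absorbed by smallness. Writing it as $\mathrm{div}_v(bw)$ and bounding $bw$ through the integrability of $w$ makes the estimate circular, with the non-small constant $\|b\|$.

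The paper's device is to solve the auxiliary whole-space problem with the full drift,
\begin{equation*}
\partial_s w-\Delta_v w-v\cdot\nabla_x w-b\cdot\nabla_v w=\mathrm{div}_vF+f_1+f_2+f_3,\qquad w|_{s\le-\bar\tau^2}=0,
\end{equation*}
with $F=-2u\nabla_v\eta_0$ and $f_3=-u\,b\cdot\nabla_v\eta_0$. For this $w$ the energy estimate does not involve $b$ at all: thanks to $\mathrm{div}_vb=0$, the cutoff term is $O(R^{-1})$ and vanishes as $R\to\infty$.

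In Duhamel's formula, $b\cdot\nabla_vw$ is then a source with $\|b\cdot\nabla_vw\|_{L^{\hat q}_tL^{\hat{\mathbf p}}_z}\le\|b\|_{L^{q_1}_tL^{\mathbf p_1}_z}\|\nabla_vw\|_{L^2}$, where $1/\hat q=1/q_1+1/2$ and $1/\hat{\mathbf p}=1/\mathbf p_1+1/2$. This is controlled linearly by the $b$-free energy. The Kolmogorov semigroup gains $\beta$ anisotropic derivatives from this space provided $\beta+S_1<2-T_1$.

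That is where the upper bound on $\beta$ is used, not, as you state, to make $(u-k)_+^2$ integrable against $b$. Integrability against $b$ is the role of the lower bound $\beta>S_1/(2-T_1)$, which places $(\nu_1,\mathbf r_1)$ inside the embedding range $\mathscr I_\beta$.

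Finally, a comparison argument for $\bar w=u\eta_0-w$ gives $u\eta_0\le w$; it tests with $T_M(\bar w)\chi_R^2$ and again uses $\mathrm{div}_vb=0$. This transfers the bounds to the subsolution and yields a De Giorgi-class inequality linear in $(u-\kappa)^+$. The cutoff drift term in the Caccioppoli inequality, which your ``main obstacle'' paragraph focuses on, is the routine part.

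Your uniqueness step also does not close as written. In the weighted energy of the difference $w$, bounding the cutoff term $\int w^2|b|\,|\nabla_v\psi|\psi$ with only the qualitative fact $w\in L^\infty$ gives at best $\|w\|_\infty\|b(t)\|_{L^2_{\rm loc}}E(t)^{1/2}$ (or $E^{1-1/p_1}$ via H\"older). The resulting differential inequality is sublinear and admits nonzero solutions with $E(0)=0$.

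What makes Gronwall linear in the paper is the quantitative local bound from the De Giorgi step, applied to $\pm w$ with $f=0$ along Galilean shifts $w^{t,z_0}$. It controls $\sup_{z_0}\|\mathbf 1_{B_1(z_0)}w(t)\|_{L^2}^2$ by the $L^2$ norm on the larger shifted cylinder, hence by $\int_0^t\sup_{z}\|\mathbf 1_{B_1(z)}w(s)\|_{L^2}^2\,ds$. The same inequality with $f$ gives the a priori bound.

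Your existence step is fine. Kinetic averaging compactness is not even needed: $b_n\to b$ strongly and $\nabla_vu_n\rightharpoonup\nabla_vu$ weakly in $L^2_{\rm loc}$ suffice.
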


\br
The estimates obtained above also provide the basic PDE input for the associated kinetic SDE
$$\dif X_t=V_t\,\dif t,\quad\dif V_t=b(t,X_t,V_t)\,\dif t+\sqrt2\,\dif W_t.$$
Indeed, applying these estimates to smooth approximations of $b$ yields uniform Krylov-type bounds, while the usual weak convergence argument gives the existence of a weak solution to the this SDE under the present assumptions. Since this procedure is standard, we omit the details.
\er

\subsection{Organization of the paper}

The remainder of the paper is organized as follows. In Section~\ref{sec:preliminary}, we introduce the mixed-norm anisotropic Besov spaces and recall the abstract De Giorgi class used in the subsequent analysis. In Section~\ref{sec:local-boundedness}, we define weak sub-solutions and combine regularization estimates for the Kolmogorov semigroup in anisotropic Besov spaces with the divergence-free energy cancellation and a De Giorgi iteration to establish local boundedness and energy estimates. In Section~\ref{sec:global-wellposedness}, we combine the local estimate with Galilean localization and a uniformly local energy argument to prove the global boundedness, existence and uniqueness of weak solutions.

\section{Preliminaries}\label{sec:preliminary}

In this section, we introduce some definitions and preliminary results for later use. We first recall the anisotropic Besov spaces adapted to the kinetic scaling and then state an abstract De Giorgi iteration theorem.

\subsection{Anisotropic Besov spaces}

For $\bbp=(p_x,p_v)\in[1,\infty]^2$, we define the mixed Lebesgue space by (cf. \cite{BP61})
$$L^\bbp(\mR^{2d}):=L^{p_v}\big(\mR_v^d;L^{p_x}(\mR_x^d)\big),$$
equipped with the norm
\begin{align}\label{norm}
\|f\|_{L^\bbp}=\|f\|_\bbp:=\|\|f(\cdot,v)\|_{L_x^{p_x}}\|_{L_v^{p_v}}.
\end{align}
When $\bbp=(p,p)$, we simply write $L^\bbp=L^p$ and $\|f\|_\bbp=\|f\|_p$. 
For simplicity of notation, we define 
\begin{align}
\frac{1}{\bbp}:=\big(\frac{1}{p_x},\frac{1}{p_v}\big),\quad \bba\cdot\frac{d}{\bbp}:=\frac{3d}{p_x}+\frac{d}{p_v},\no
\end{align}
where $\bba:=(3,1)$, and for any $\bbp,\bbp'\in[1,\infty]^2$, 
$$\bbp\leq\bbp'\Leftrightarrow p_x\leq p'_x,\quad p_v\leq p'_v.$$
We also introduce the anisotropic distance:
\begin{align}\label{distance}
|z|_\bba:=|x|^{\ff{1}{3}}+|v|, \ \ z=(x,v)\in\mR^{2d}.
\end{align}

Let $\sS(\mR^{2d})$ be the Schwartz space of all rapidly decreasing functions on $\mR^{2d}$, and let $\sS'(\mR^{2d})$ denote the dual space of $\sS(\mR^{2d})$, known as Schwartz generalized function (or tempered distribution) space. For any $f\in\sS(\mR^{2d})$, we define the Fourier transform $\hat f$ and inverse Fourier transform $\check f$ respectively by
$$\hat f(\xi):=\int_{\mR^{2d}}\e^{-i\xi\cdot z}f(z)\dif z, \quad \xi\in\mR^{2d},$$
$$\check f(z):=\ff{1}{(2\pi)^{2d}}\int_{\mR^{2d}}\e^{i\xi\cdot z}f(\xi)\dif\xi, \quad z\in\mR^{2d}.$$
Additionally, we denote the Banach space of all bounded continuous functions on $\mR^{2d}$ by $C_b=C_b(\mR^{2d})$, and $C_b^\infty=C_b^\infty(\mR^{2d})$ is the space of all functions with bounded derivatives of all orders.

To introduce the anisotropic Besov space, we first give an anisotropic version of dyadic partition of unity. For $r>0$ and $z\in\mR^{2d}$, the ball centered at $z$ and with radius $r$ in terms of the anisotropic distance (see \eqref{distance}) is defined as follows:
$$B_r^\bba(z):=\{z'\in\mR^{2d}:|z-z'|_\bba\leq r\},\quad B_r^\bba:=B_r^\bba(0).$$
Let $\chi_0^\bba$ be a symmetric $C^\infty$-function on $\mR^{2d}$ such that
$$\chi_0^\bba(\xi)=1 \text{ for } \xi\in B_1^\bba \text{\quad and\quad }\chi_0^\bba(\xi)=0 \text{ for } \xi\notin B_{4/3}^\bba.$$
For $j\in\mN$, define 
\begin{align}
\phi_j^\bba(\xi):=\left\{
\begin{aligned}
&\chi_0^\bba(2^{-j\bba}\xi)-\chi_0^\bba(2^{-(j-1)\bba}\xi),\quad &j\geq1,\\
&\chi_0^\bba(\xi),&j=0,
\end{aligned}
\right.\nonumber
\end{align}
where for $s\in\mR$ and $\xi=(\xi_1,\xi_2)$, $2^{s\bba}\xi:=(2^{3s}\xi_1,2^s\xi_2)$. This definition ensures that 
\begin{align}
\sum_{j\geq0}\phi_j^\bba(\xi)=1,\quad \forall\xi\in\mR^{2d},\no
\end{align}
and
\begin{align}
\supp(\phi_j^\bba)\subset\{\xi:2^{j-1}\leq|\xi|_\bba\leq2^{j+1}\},\quad j\geq1;\quad \supp(\phi_0^\bba)\subset B_{4/3}^\bba.\no
\end{align}
For given $j\geq0$, we define the dyadic anisotropic block operator $\cR_j^\bba$ on $\sS'$ as
\begin{align}
\cR_j^\bba f(z):=(\phi_j^\bba\hat f)\check\,(z)=\check\phi_j^\bba*f(z),\no
\end{align}
where the convolution is taken in the distributional sense.

The following Bernstein inequality is a standard result and can be found in \cite{ZZ24}.

\bl\label{bernstein}
For any $\bbk=(k_1,k_2)\in\mN^2, \bbp\leq\bbp'\in[1,\infty]^2$, there is a constant $C=C(\bbk, \bbp, \bbp', d)>0$ such that for all $j\geq0$, 
\begin{align}
\|\nn_x^{k_1}\nn_v^{k_2}\cR_j^\bba f\|_{\bbp'}\lesssim_C 2^{j\bba\cdot(\bbk+\ff{d}{\bbp}-\ff{d}{\bbp'})}\|\cR_j^\bba f\|_\bbp.\no
\end{align}
\el

Now we define the anisotropic Besov spaces as follows (see \cite[Chapter 5]{Triebel06}).

\bd\label{besov}
Let $s\in\mR$, $q\in[1,\infty]$ and $\bbp\in[1,\infty]^2$. The anisotropic Besov space is defined by
$$\bB_{\bbp,\bba}^{s,q}:=\Big\{f\in\sS':\|f\|_{\bB_{\bbp,\bba}^{s,q}}:=\Big(\sum_{j\geq0}(2^{js}\|\cR_j^\bba f\|_\bbp)^q\Big)^{1/q}<\infty\Big\}.$$
For simplicity, we denote $\bB_{\bbp,\bba}^{s}:=\bB_{\bbp,\bba}^{s,\infty}$.
\ed

\subsection{The De Giorgi class}
In this subsection, we recall an abstract De Giorgi’s class introduced in \cite{Zhang25}.
Let $\sI\subset(1,\infty)^N$ be an open multi-index set and $Q:=(Q_\tau)_{\tau\in[1,2]}$ be a family of increasing bounded open set in $\mR^N$ such that
$$Q_\tau\cap Q_\s^c=\varnothing\text{ for }\tau<\s,\quad \cap_{\s>\tau}Q_\s=\bar Q_\tau.$$
\begin{definition}
A function $u\in L^1(Q_2)$ belongs to the De Giorgi class $\cD\cG_{\sI}^+(Q)$ if there are $\bbp_i\in\sI$, $i=1,\ldots,m$, $1\leq j<m$, $\l,\, \cA\geq0$, such that for any $\bbp\in\sI$, there exists a constant $C_\bbp>0$ such that, for every $\bbp\in\sI$, $1\leq\tau<\s\leq2$, and $\k>0$,
$$(\s-\tau)^\l\|\bbone_{Q_\tau}(u-\k)^+\|_{\mL^\bbp}\lesssim_{C_\bbp}\sum_{i=1}^j\|\bbone_{Q_\s}(u-\k)^+\|_{\mL^{\bbp_i}}+\cA\sum_{i=j+1}^m\|\bbone_{\{u>\k\}\cap Q_\s}\|_{\mL^{\bbp_i}}.$$
\end{definition}

We shall use the following De Giorgi iteration, established in \cite[Theorem 2.10]{Zhang25}.

\begin{theorem}\label{degiorgi}
Set $u\in\cD\cG_{\sI}^+(Q)$. For any $p>0$, there exist constants $\g,C>0$ only depending on $p,\rm{Vol} (Q_2)$ and $\bbp_i,\l,C_\bbp$ in the definition of $\cD\cG_{\sI}^+(Q)$ such that for all $1\le\tau<\s\le2$,
\begin{align}
\|u^+\bbone_{Q_\tau}\|_{\mL^\infty}\lesssim_C (\s-\tau)^{-\g}\|u^+\bbone_{Q_\s}\|_{\mL^p}+\cA.
\end{align}
\end{theorem}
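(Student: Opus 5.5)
The plan is to run the classical De Giorgi level-set iteration inside the abstract inequality, first with a fixed large integrability exponent, and then pass to an arbitrary $p>0$ by a standard interpolation and absorption argument in the radii.

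\textbf{Step 1: nonlinear recursion.} Fix $1\le\tau<\s\le2$ and a level $\k_*>0$ to be chosen. Set
$$\k_n:=\k_*(1-2^{-n}),\quad \tau_n:=\tau+(\s-\tau)2^{-n},\quad Y_n:=\|\bbone_{Q_{\tau_n}}(u-\k_n)^+\|_{\mL^{\bbp}},$$
where $\bbp\in\sI$ is chosen componentwise strictly larger than all $\bbp_i$. I am assuming this choice is possible because $\sI$ is open and contains the $\bbp_i$; otherwise I would first enlarge the indices through the defining inequality itself.

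Apply the class inequality with $\tau_{n+1}<\tau_n$ and level $\k_{n+1}$.
\begin{itemize}
\item For the first group of terms, use $(u-\k_{n+1})^+\le(u-\k_n)^+$. Then apply H\"older in mixed norms with $1/\bbp_i=1/\bbp+1/\bbr_i$:
$$\|\bbone_{Q_{\tau_n}}(u-\k_{n+1})^+\|_{\bbp_i}\le Y_n\,\|\bbone_{\{u>\k_{n+1}\}\cap Q_{\tau_n}}\|_{\bbr_i}.$$
\item For the indicator terms, use Chebyshev. On $\{u>\k_{n+1}\}$ we have $(u-\k_n)^+\ge \k_*2^{-n-1}$, so
$$\|\bbone_{\{u>\k_{n+1}\}\cap Q_{\tau_n}}\|_{\bbr}\le (2^{n+1}/\k_*)^{\theta}\,Y_n^{\theta}.$$
Here $\theta>0$ is determined by comparing $\bbr$ with $\bbp$: an indicator's mixed norm is a power of another mixed norm of itself, and the volume of $Q_2$ absorbs index mismatches.
\end{itemize}
Collecting these gives
$$Y_{n+1}\lesssim (\s-\tau)^{-\l}2^{n\l}\Big(\frac{2^{n}}{\k_*}\Big)^{\theta}\Big(Y_n^{1+\theta}+\cA\,\k_*^{-1}\cdots\Big).$$
Choosing $\k_*\ge\cA$ normalizes the $\cA$ terms to the form $C b^n\k_*^{-\theta'}Y_n^{\theta'}$ with $\theta'>1$. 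Thus $Y_{n+1}\le C(\s-\tau)^{-\l}b^n\k_*^{-\epsilon}Y_n^{1+\epsilon}$ for some $\epsilon>0$.

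\textbf{Step 2: choice of level.} By the standard fast geometric convergence lemma, $Y_n\to0$ provided $Y_0\le C^{-1/\epsilon}b^{-1/\epsilon^2}(\s-\tau)^{\l/\epsilon}\k_*$. It therefore suffices to take
$$\k_*=C(\s-\tau)^{-\l/\epsilon}\|u^+\bbone_{Q_\s}\|_{\mL^{\bbp}}+\cA.$$
Since $Y_n\to0$ means $u\le\k_*$ a.e. on $Q_\tau$, this yields the claim for this fixed large $\bbp$.

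\textbf{Step 3: arbitrary $p>0$.} This step is the main technical obstacle. The difficulty is that the bound must hold with an arbitrary, possibly small, $p$ on the right-hand side. Writing $M(\tau):=\|u^+\bbone_{Q_\tau}\|_\infty$ and interpolating,
$$\|u^+\bbone_{Q_\s}\|_{\bbp}\le M(\s)^{1-p/p_{\min}}\,\|u^+\bbone_{Q_\s}\|_p^{p/p_{\min}}\,{\rm Vol}(Q_2)^{\cdots}.$$
(Mixed norms are handled via the trivial embeddings on the bounded set $Q_2$.) Young's inequality then gives
$$M(\tau)\le \tfrac12 M(\s)+C(\s-\tau)^{-\g}\|u^+\bbone_{Q_\s}\|_p+\cA.$$
The usual iteration lemma for such inequalities, applied over $\tau\le\tau'<\s'\le\s$, removes the $\tfrac12M(\s)$ term and yields the stated estimate. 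Two points require care here: $M(\s)$ must be finite a priori, which follows from the case of Step 2; and the exponent bookkeeping in the mixed-norm H\"older/Chebyshev step must produce a strictly positive $\epsilon$. The latter is where the gap between $\bbp$ and the $\bbp_i$ is essential.
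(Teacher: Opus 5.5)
The paper gives no proof of this statement; it is quoted from \cite[Theorem 2.10]{Zhang25}, so your argument has to stand on its own. The overall plan is the standard one and is sound: level-set iteration at a fixed exponent, fast geometric convergence, then interpolation and absorption in the radii to reach any $p>0$. However, Step 1 relies on a choice the hypotheses do not provide, namely a single $\bbp\in\sI$ strictly larger, componentwise, than all of $\bbp_1,\dots,\bbp_m$. Openness of $\sI$ only gives, for each $i$ separately, some $\bbp_i'\in\sI$ with $\bbp_i'>\bbp_i$. Nothing forces a common strict upper bound to lie in $\sI$: take $\sI$ to be the union of two small balls around $(2,10)$ and $(10,2)$. ``Enlarging the indices through the defining inequality'' cannot produce one either, since that inequality only controls exponents already in $\sI$. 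The strictness matters: it is exactly what yields $\|\bbone_E\|_{\mL^{\bbp_i}}\lesssim\|\bbone_E\|_{\mL^{\bbp}}^{\theta'}$ with $\theta'>1$. That is the superlinearity of the $\cA$-terms, without which the recursion does not close.

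The repair is to work index by index. Fix $\bbp_i'\in\sI$ with $\bbp_i'>\bbp_i$ for every $i=1,\dots,m$ (including $i>j$), and set $Y_n:=\sum_{i=1}^m\|\bbone_{Q_{\tau_n}}(u-\k_n)^+\|_{\mL^{\bbp_i'}}$. Apply the class inequality $m$ times, with left-hand exponent $\bbp_k'$, from $\tau_{n+1}$ to $\tau_n$ at level $\k_{n+1}$. Let $E:=\{u>\k_{n+1}\}\cap Q_{\tau_n}$ and $1/\bbp_i=1/\bbp_i'+1/\bbr_i$. Using the identity $\|\bbone_E\|_{\mL^{c\bbp}}=\|\bbone_E\|_{\mL^{\bbp}}^{1/c}$ and Chebyshev, $\|\bbone_E\|_{\mL^{\bbp_i'}}\le 2^{n+1}Y_n/\k_*$, one gets
$$\|\bbone_{Q_{\tau_n}}(u-\k_{n+1})^+\|_{\mL^{\bbp_i}}\lesssim Y_n\Big(\frac{2^{n}Y_n}{\k_*}\Big)^{\theta_i},\qquad \cA\|\bbone_E\|_{\mL^{\bbp_i}}\lesssim\k_*\Big(\frac{2^{n}Y_n}{\k_*}\Big)^{\theta_i'},\qquad \theta_i>0,\ \theta_i'>1.$$
This works because each $\bbp_i$ is only ever compared with its own enlargement $\bbp_i'$. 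Note the factor $\k_*$ in the second bound, which your normalization dropped. With $Z_n:=Y_n/\k_*\le1$ this gives $Z_{n+1}\le C(\s-\tau)^{-\l}B^nZ_n^{1+\epsilon}$ for some $B>1$, and Step 2 goes through with $\k_*=C(\s-\tau)^{-\l/\epsilon}\sum_i\|u^+\bbone_{Q_\s}\|_{\mL^{\bbp_i'}}+\cA$.

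Two smaller corrections in Step 3. First, interpolate through the \emph{largest} component $p_*$ of the exponents: on a bounded set, $\|g\|_{\mL^{\bbp}}\lesssim\|g\|_{\mL^{p_*}}\le\|g\|_{\mL^\infty}^{1-p/p_*}\|g\|_{\mL^p}^{p/p_*}$ for $p<p_*$. With the smallest component the bound fails; test it on indicators of products of sets of very unequal sizes. Second, the a priori finiteness you invoke needs $\|u^+\bbone_{Q_\s}\|_{\mL^{\bbp_i'}}<\infty$, which $u\in L^1(Q_2)$ does not give. This finiteness is an implicit hypothesis of the statement itself, since otherwise the class inequality can hold vacuously with an infinite right-hand side. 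It is automatic in the paper's application, where weak subsolutions are bounded. With it in hand, run the absorption lemma on $[\tau,\s']$ and let $\s'\uparrow\s$.
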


\section{Local boundedness of weak (sub)-solutions of kinetic equations}\label{sec:local-boundedness}

In this section, we establish local boundedness and energy estimates for weak sub-solutions of the kinetic equation. The proof combines the divergence-free energy cancellation and the abstract De Giorgi iteration stated in Section~\ref{sec:preliminary}.

Consider the following kinetic equation:
\begin{align}\label{kineticpde}
\p_t u=\Delta_v u+v\cdot\nn_x u+b\cdot\nn_v u+f.
\end{align}
Here $b:\mR^{1+2d}\to\mR^d$ satisfies $\div_v b=0$, $f:\mR^{1+2d}\to\mR$, and for every bounded set $Q\subset\mR^{1+2d}$,
\begin{align}\label{local-integrability}
b\bbone_Q\in\mL^2,\quad f\bbone_Q\in\mL^1,
\end{align}
where, for $p,q\in[1,\infty]$,
$$\mL_t^q(\mL_z^p):=L^q(\mR;L^p(\mR^{2d})),\quad\mL^p:=\mL_t^p(\mL_z^p).$$
We also introduce the following space of solutions: For every open set $Q\subset\mR^{1+2d}$
$$\sV_Q:=\{f\in\mL_{\text{loc}}^1:\|f\|_{\sV_Q}:=\|\bbone_{Q}f\|_{\mL_t^\infty \mL_z^2}+\|\bbone_Q \nn_v f\|_{\mL^2}<\infty\},\quad\sV_{\rm loc}:=\cap_{Q \text{bounded}}\sV_Q.$$
For given $t\in\mR$ and $r>0$,
$$\cI_t:=\bbone_{(-\infty,t]},\quad Q_r:=\{(t,x,v):|t|<r^2,|x|<r^3,|v|<r\}.$$

Now we introduce the definition of weak (sub)-solutions of kinetic equation \eqref{kineticpde}. 
\begin{definition}\label{weaksubsol}
Let $Q\subset\mR^{1+2d}$ be a bounded domain. A function $u\in\sV_Q\cap\mL_Q^\infty$ is called a weak sub-solution of PDE \eqref{kineticpde} in $Q$ if for any nonnegative $\varphi\in C_c^\infty(Q)$,
\begin{align}\label{weaksubsolineq}
-\int_Q u\p_t\varphi\le\int_Q u\Delta_v \varphi-\int_Q u(v\cdot\nn_x\varphi)+\int_Q(b\cdot\nn_v u)\varphi+\int_Q f\varphi.
\end{align}
We say that $u$ is a weak solution to \eqref{kineticpde} in $Q$ if \eqref{weaksubsolineq} holds with equality for every $\varphi\in C_c^\infty(Q)$. If $u\in\mL_{\rm loc}^\infty\cap\sV_{\rm loc}$ is a weak solution in every bounded domain $Q$, then we call $u$ a global weak solution.
\end{definition}

The following lemma is from \cite[Lemma 3.3 and Lemma 3.5]{Zhang25}.

\bl\label{weaksubsollemma}
Let $u\in\sV_Q\cap\mL_Q^\infty$ be a weak sub-solution of PDE \eqref{kineticpde} in $Q$. Then
\begin{enumerate}[]
\item \rm{(i)} If $u$ is nonnegative, then for any nonnegative $\eta\in C_c^\infty(Q)$ and $t\in\mR$,
\begin{align}
\begin{split}
&\ff12\int_{\mR^{2d}}|(u\eta)(t)|^2+\int_{\mR^{1+2d}}|\nn_v(u\eta)|^2\cI_t\\
&\qquad\le\int_{\mR^{1+2d}}u^2\big[(\eta\big(\p_s \eta-v\cdot\nn_x \eta-b\cdot\nn_v\eta\big)+|\nn_v\eta|^2)\big]\cI_t+\int_{\mR^{1+2d}}fu\eta^2\cI_t.
\end{split}
\end{align}
\item \rm{(ii)} For any $\k\ge0$, $(u-\k)^+$ is still a weak sub-solution of PDE \eqref{kineticpde} with $f\bbone_{\{u>\k\}}$ in place of $f$.
\end{enumerate}
\el

\subsection{Gain of regularity via the Duhamel formula}

Consider the drift-free Cauchy problem:
\begin{align}\label{nodriftpde}
\p_t u=\Delta_v u+v\cdot\nn_x u+f,\quad u|_{t\le0}=0. \end{align}
Let $W_t$ be a $d$-dimensional standard Brownian motion. Define the kinetic semigroup $$P_t f(x,v):=\mE f(x+tv+\sqrt{2}\int_0^tW_s\dif s,v+\sqrt2 W_t).$$
Then by the Duhamel formula, the unique solution to \eqref{nodriftpde} can be represented by
\begin{align}\label{duhamel}
u(t,z)=\int_0^t P_{t-s} f(s,z)\dif s.
\end{align}

The following kinetic Schauder-type estimate and anisotropic interpolation inequality are taken from \cite[Lemmas 3.6 and 3.7]{Zhang25}, respectively.

\bl\label{semies}
Set $1\le q\le \nu\le\infty$, $\g\in\mR$ and $\bbp\in[1,\infty]^2$. For any $T>0$ and $\beta<\g+2(1+\ff1\nu-\ff1q)$, there exists a constant $C:=C(T,d,\bbp,q,\nu,\g,\beta)>0$ such that for $t\in(0,T]$,
\begin{align}
\|u\cI_t\|_{\mL_t^\nu \bB_{\bbp,\bba}^\beta}\lesssim_C \|f\cI_t\|_{\mL_t^q \bB_{\bbp,\bba}^\g},
\end{align}
where $u$ is defined in \eqref{duhamel}.
\el

\bl\label{interpolation}
For any $\b>0$, $1\le q\le \nu\le\infty$ and $\bbr\in[2,\infty]^2$ with
$$\ff2\bbr\ge\big(1-\ff q\nu\big)\bbone,\quad \bba\cdot\big(\ff d\bbr-\ff d{\bb2}\big)+\ff{q\b}\nu>0,$$
there exists a constant $C:=C(\b,\nu,q,\bbr)>0$ such that
\begin{align}
\|u\|_{\mL_t^\nu\mL_z^\bbr}\lesssim_C \|u\|_{\mL_t^\infty\mL_z^2}+\|u\|_{\mL_t^q\bB_{\bb2,\bba}^\b}.
\end{align}
\el

\subsection{Local boundedness with $L^p$-integrable inhomogeneous $f$}

We now use the estimates from the preceding section to establish local boundedness for weak sub-solutions. Let
\begin{align}\label{qpdef}
q_1\in(2,\infty),\quad \bbp_1=(p_{1,x},p_{1,v})\in(2,\infty)^2,\quad q_0\in(1,\infty),\quad \bbp_0=(p_{0,x},p_{0,v})\in(1,\infty)^2.
\end{align}
For $i=0,1$, set
\begin{align}\label{STdef}
S_i:=\bba\cdot\frac d{\bbp_i}=\frac{3d}{p_{i,x}}+\frac d{p_{i,v}},\quad T_i:=\frac2{q_i}.
\end{align}

Throughout this section, we impose the following assumption on the the drift term $b$ and the inhomogeneous term $f$.

\noindent{\bf ($\wt{\text{H}}$)}
Assume that
$$b\in L^{q_1}\big((-4,4);L^{\bbp_1}(\mR^{2d})\big),\quad\div_v b=0,\quad\bbone_{Q_2}f\in\mL_t^{q_0}\mL_z^{\bbp_0},$$
and that there exists $\b\in(0,1)$ such that
$$\max\Big\{\frac{S_1}{2-T_1},\frac{S_0}{2-T_0}\Big\}<\b<2-S_1-T_1.$$

\begin{theorem}\label{thm:df-thm43}
Under Assumption {\bf ($\wt{\text{H}}$)}, let $u$ be a weak subsolution of \eqref{kineticpde} in $Q_2$. Then for all $|t|\leq1$,
\begin{align}\label{df43-estimate}
\|\bbone_{Q_1}u^+\cI_t\|_{\mL^\infty}+\|\bbone_{Q_1}\nn_vu^+\cI_t\|_{\mL^2}\lesssim_C\|\bbone_{Q_2}u^+\cI_t\|_{\mL^2}+\|\bbone_{Q_2}f\cI_t\|_{\mL_t^{q_0}\mL_z^{\bbp_0}}.
\end{align}
Here $C>0$ depends only on $d,q_1,\bbp_1,q_0,\bbp_0,\b$ and $\|b\|_{\mL_t^{q_1}\mL_z^{\bbp_1}((-4,4)\times\mR^{2d})}$.
\end{theorem}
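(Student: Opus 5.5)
We aim to show that, for each level $\k\ge0$, the truncation $(u-\k)^+$ restricted to the nested cylinders $\widetilde Q_\tau:=Q_\tau\cap\{s\le t\}$, $\tau\in[1,2]$, belongs to the De Giorgi class $\cD\cG_\sI^+(\widetilde Q)$. Here $\cA\approx\|\bbone_{Q_2}f\cI_t\|_{\mL_t^{q_0}\mL_z^{\bbp_0}}$, and $\sI$ is a set of exponents slightly above $2$. Theorem \ref{degiorgi} with $p=2$ then bounds $\|\bbone_{Q_1}u^+\cI_t\|_{\mL^\infty}$. The gradient term in \eqref{df43-estimate} follows afterwards from one more application of the energy inequality of Lemma \ref{weaksubsollemma}(i), with a cutoff equal to $1$ on $Q_1$ and supported in $Q_2$. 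In that application the drift term is treated as below, now with $L^\infty$ control of $u^+$ on the support.

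\emph{Step 1 (energy inequality).} By Lemma \ref{weaksubsollemma}(ii), $w_\k:=(u-\k)^+$ is a nonnegative subsolution with source $f\bbone_{\{u>\k\}}$. Fix $1\le\tau<\s\le2$ and a cutoff $\eta$ with $\eta=1$ on $Q_\tau$, $\supp\eta\subset Q_\s$, and $|\p_t\eta|+|v\cdot\nn_x\eta|+|\nn_v\eta|^2\lesssim(\s-\tau)^{-4}$. Lemma \ref{weaksubsollemma}(i) gives $\|w_\k\eta\|^2_{\sV}$ bounded by
\[
(\s-\tau)^{-4}\|\bbone_{Q_\s}w_\k\|_{\mL^2}^2+\int w_\k^2\eta|b||\nn_v\eta|\cI_t+\int|f|\bbone_{\{u>\k\}}w_\k\eta^2\cI_t .
\]
This is the point where $\div_vb=0$ is used: the drift appears only through $w_\k^2 b\cdot\nn_v\eta$, with no $\nn_vw_\k$. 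By H\"older, this term is bounded by
\[
\|b\|_{\mL_t^{q_1}\mL_z^{\bbp_1}}\|\bbone_{Q_\s}w_\k\|^2_{\mL_t^{2q_1'}\mL_z^{2\bbp_1'}} ,
\]
and the $f$-term is bounded by $\cA\,\|\bbone_{Q_\s}w_\k\|_{\mL_t^{q_0'}\mL_z^{\bbp_0'}}$.

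\emph{Step 2 (gain of integrability).} Set $g:=w_\k\eta$. In the distributional sense $g$ is a subsolution of
\[
\p_tg=\Delta_vg+v\cdot\nn_xg+\div_v(bg)+F,
\]
where $F$ collects $w_\k(\p_t-v\cdot\nn_x-\Delta_v)\eta$, $-2\div_v(w_\k\nn_v\eta)$, $-w_\k b\cdot\nn_v\eta$ and $f\bbone_{\{u>\k\}}\eta$. We have rewritten $b\cdot\nn_vg=\div_v(bg)$ using $\div_vb=0$.

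To run the Duhamel comparison \eqref{duhamel}, I would either apply it to the positive part or work with the solution $u$ of the drift-free problem with source the positive part of the right-hand side. For subsolutions one can argue with $g\le$ a Duhamel majorant in the same spirit as \cite{Zhang25}.

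Next, each source is placed in an anisotropic Besov space via the Bernstein inequality (Lemma \ref{bernstein}), using $L^\bbp\subset\bB^{-\bba\cdot(d/\bbp-d/\bb2)}_{\bb2,\bba}$ up to an $\varepsilon$ loss:
\begin{itemize}
\item $bg$ lies in $\mL_t^{r}\mL_z^{\bbr}$ with $\frac1{\bbr}=\frac1{\bbp_1}+\frac12$, using $g\in\mL_t^\infty\mL_z^2$. Hence $\div_v(bg)\in\mL_t^{q_1}\bB^{-1-S_1}_{\bb2,\bba}$.
\item $f\eta\in\mL_t^{q_0}\bB^{-\tilde S_0}_{\bb2,\bba}$, where the index is computed from the relevant part of $S_0$.
\end{itemize}
Lemma \ref{semies} then gives $g\in\mL_t^{q}\bB^{\b}_{\bb2,\bba}$, provided $\b<1-S_1+2(\frac12-\frac1{q_1})$ in the drift term, i.e.\ $\b<2-S_1-T_1$, with analogous conditions for the other terms. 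Lemma \ref{interpolation} converts this into $\|g\|_{\mL_t^\nu\mL_z^{\bbr}}$ for some $(\nu,\bbr)$ strictly above the H\"older-dual exponents $(2q_1',2\bbp_1')$ and $(q_0',\bbp_0')$. The lower bounds $\b>\frac{S_i}{2-T_i}$ in ($\wt{\text H}$) are exactly what make the interpolation condition $\bba\cdot(\frac d\bbr-\frac d{\bb2})+\frac{q\b}\nu>0$ compatible with these exponents.

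\emph{Step 3 (closing and main obstacle).} The right-hand sides from Step 2 involve $\|g\|_{\sV}$ times $\|b\|$, and this quantity is controlled by Step 1. Step 1 in turn involves higher norms of $w_\k$ on the larger cylinder. I would close the loop with Young's inequality, which yields an $\varepsilon$-fraction of $\|w_\k\bbone_{Q_{\s'}}\|_{\mL_t^\nu\mL_z^\bbr}$ for $\tau<\s'<\s$, and then absorb that fraction by the standard iteration lemma over intermediate radii. The price is a power of $(\s-\tau)^{-1}$, which the De Giorgi class allows.

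Finally, H\"older with $\bbone_{\{u>\k\}}$ turns the $f$-contributions into the $\cA\|\bbone_{\{u>\k\}\cap Q_\s}\|$ terms, which gives the De Giorgi class inequality. The main obstacle is Step 2/3: the drift is supercritical, so it cannot be made small. The scheme closes only because the drift enters quadratically in $w_\k$ but with no gradient, so a strict gain of integrability above $2q_1'$ is available. I expect the exponent bookkeeping showing that \eqref{intro-parameter-condition} yields a nonempty admissible $(\nu,\bbr)$ to be the delicate part.
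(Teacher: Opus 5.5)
There is a genuine gap in Step 2, and it sits exactly where the supercritical range is decided.

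\textbf{The drift estimate.} You bound the drift source through $g\in\mL_t^\infty\mL_z^2$, which gives $\div_v(bg)\in\mL_t^{q_1}\bB^{-1-S_1}_{\bb2,\bba}$, and then claim $\beta<2-S_1-T_1$. But Lemma \ref{semies} requires $q\le\nu$. With $q=q_1>2$ it yields at most $\beta<-1-S_1+2=1-S_1$.

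Your expression $1-S_1+2(\frac12-\frac1{q_1})$ is the formula with $\nu=2<q_1$, which the lemma does not allow. Dropping the time exponent to $2$ on a bounded interval does not help either, since the gain is capped at $\beta<\gamma+2$. Because $1-S_1<2-S_1-T_1$, your window $\frac{S_1}{2-T_1}<\beta<1-S_1$ is strictly smaller than the one in {\bf ($\wt{\text{H}}$)}. It is empty whenever $S_1\ge1$, which {\bf ($\wt{\text{H}}$)} permits for small $T_1$.

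The norms $\|g\|_{\mL_t^\infty\mL_z^2}$ and $\|\nn_vg\|_{\mL^2}$ scale alike, but only the second has time exponent $2$. Keep the drift in non-divergence form: $b\cdot\nn_vg\in\mL_t^{\hat q}\mL_z^{\hat\bbp}$ with $\frac1{\hat q}=\frac1{q_1}+\frac12$. Then $\hat q<2$, so $\nu=2$ is admissible, and you get $\beta+S_1<3-\frac2{\hat q}=2-T_1$. This is \eqref{df-Jb-new}.

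\textbf{The comparison step.} Duhamel gives Besov regularity of a majorant $h\ge g$, not of the subsolution $g$, and Besov norms are not monotone. Lemma \ref{interpolation} needs both $\|h\|_{\mL_t^\infty\mL_z^2}$ and $\|h\|_{\mL_t^2\bB^\beta_{\bb2,\bba}}$ for the same function.

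If $h$ solves the drift-free problem with $b\cdot\nn_vg$ as a source, it has no $\mL_t^\infty\mL_z^2$ bound in the supercritical regime. Lemma \ref{semies} with $\nu=\infty$ would need $1-S_1-T_1>0$. The energy identity for $h$ contains $\int(b\cdot\nn_vg)h$, which $\div_vb=0$ does not cancel because $g\neq h$, and controlling it is again a subcritical condition.

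The idea you are missing is the paper's choice of majorant: $w$ solves the full equation \eqref{df-cauchy-eq-new}, with $b\cdot\nn_vw$ kept on the left. Then:
\begin{enumerate}
\item the cancellation gives $w$ its own drift-free energy bound;
\item the Duhamel step uses $b\cdot\nn_vw$ with $\nn_vw\in\mL^2$;
\item the $\eps$-terms are absorbed via Lemma \ref{interpolation};
\item $u\eta_0\le w$ follows from a truncated maximum principle for the drift operator, testing with $T_M(\bar w)\chi_R^2$ and using $\div_vb=0$ once more.
\end{enumerate}

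Alternatively, your drift-free majorant could be salvaged by interpolating at the Lebesgue level. That means H\"older between $\|g\|_{\mL_t^\infty\mL_z^2}$ and $\|h\|_{\mL_t^2\mL_z^{\bbr^*}}\lesssim\|h\|_{\mL_t^2\bB^\beta_{\bb2,\bba}}$, valid for $\bba\cdot(\frac d{\bb2}-\frac d{\bbr^*})<\beta$. This is monotone in $0\le g\le h$ and reproduces the condition defining $\sI_\b$, but you would have to carry it out.

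Two smaller points. ``The positive part of the right-hand side'' has no meaning for the divergence-form sources. No absorption over intermediate radii is needed, since $\|\bbone_{Q_\s}(u-\k)^+\|_{\mL_t^{\nu_i}\mL_z^{\bbr_i}}$ with $(\nu_i,\bbr_i)\in\sI_\b$ may stay on the right of the De Giorgi inequality.
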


\begin{proof}
Fix $t\in[-1,1]$ and set $\wt Q_\tau^t:=Q_\tau\cap\big((-\infty,t)\times\mR^{2d}\big)$. By Theorem \ref{degiorgi}, it suffices to prove that $u\in\cD\cG_{\sI_\b}^+(\wt Q_\cdot^t)$ with $\cA:=\|\bbone_{Q_2}f\cI_t\|_{\mL_t^{q_0}\mL_z^{\bbp_0}}$,
where
$$\sI_\b:=\Big\{(\nu,\bbr)\in(2,\infty)\times(2,\infty)^2:\ff2{\bbr}>\big(1-\ff2\nu\big)\bbone, \bba\cdot\big(\ff d{\bbr}-\ff d{\bb2}\big)+\ff{2\b}{\nu}>0\Big\}.$$
More precisely, we prove that for any $(\nu,\bbr)\in\sI_\b$, any $1\leq\tau<\s\leq2$ and any $\k\geq0$,
\begin{align}\label{df-DG-new}
(\s-\tau)^2\|\bbone_{Q_\tau}(u-\k)^+\cI_t\|_{\mL_t^\nu\mL_z^\bbr}\lesssim_C \sum_{i=0,1}\|\bbone_{Q_\s}(u-\k)^+\cI_t\|_{\mL_t^{\nu_i}\mL_z^{\bbr_i}} +\cA\|\bbone_{\{u>\k\}\cap Q_\s}\cI_t\|_{\mL_t^{\nu_0}\mL_z^{\bbr_0}},
\end{align}
where $\ff1{q_i}+\ff2{\nu_i}=1$ and $\ff1{\bbp_i}+\ff2{\bbr_i}=\bbone$. By assumption {\bf ($\wt{\text{H}}$)}, a straightforward calculation shows that
$$(\nu_i,\bbr_i)\in\sI_\b,\quad i=0,1.$$

{\bf{Step 1.}} Let $1\leq\tau<\s\leq2$ and $\bar\tau:=\ff{\tau+\s}2$. Choose $\eta_0\in C_c^\infty(Q_{\bar\tau};[0,1])$ such that $\eta_0=1$ on $Q_\tau$ and
\begin{align}\label{etacutoff}
\|\p_s\eta_0\|_\infty^{1/2}+\|v\cdot\nn_x\eta_0\|_\infty^{1/2}+\|\nn_v\eta_0\|_\infty\leq C(\s-\tau)^{-1},\quad \|\Delta_v\eta_0\|_\infty\leq C(\s-\tau)^{-2}.
\end{align}
By Lemma \ref{weaksubsollemma} (ii), $(u-\k)^+$ is a nonnegative weak sub-solution of \eqref{kineticpde} with $f\bbone_{\{u>\k\}}$ in place of $f$. Hence, it suffices to prove \eqref{df-DG-new} for $\k=0$ and $u\geq0$.

A direct calculation gives
$$\p_s(u\eta_0)-\Delta_v(u\eta_0)-v\cdot\nn_x(u\eta_0)-b\cdot\nn_v(u\eta_0)\leq \div_vF+f_1+f_2+f_3,$$
where
$$F:=-2u\nn_v\eta_0,\quad f_1:=u\big((\p_s-v\cdot\nn_x)\eta_0+\Delta_v\eta_0\big),\quad f_2:=f\bbone_{\{u>0\}}\eta_0,\quad f_3:=-u\,b\cdot\nn_v\eta_0.$$
By \eqref{etacutoff},
\begin{align}\label{df-Ff1-new}
\|F\cI_t\|_{\mL^2}+\|f_1\cI_t\|_{\mL^2}\lesssim_C(\s-\tau)^{-2}\|\bbone_{Q_\s}u\cI_t\|_{\mL_t^{\nu_1}\mL_z^{\bbr_1}}.
\end{align}
For $i=0,1$, define $(\bar q_i,\bar{\bbp}_i)$ by $$\ff1{\bar q_i}:=\ff1{q_i}+\ff1{\nu_i},\quad \ff1{\bar{\bbp}_i}:=\ff1{\bbp_i}+\ff1{\bbr_i}.$$ Then by the H\"older's inequality,
\begin{align}\label{df-f2-new}
\|f_2\cI_t\|_{\mL_t^{\bar q_0}\mL_z^{\bar{\bbp}_0}}\leq\|\bbone_{Q_2}f\cI_t\|_{\mL_t^{q_0}\mL_z^{\bbp_0}}\|\bbone_{\{u>0\}\cap Q_\s}\cI_t\|_{\mL_t^{\nu_0}\mL_z^{\bbr_0}}.
\end{align}
Similarly,
\begin{align}\label{df-f3-new}
\begin{split}
\|f_3\cI_t\|_{\mL_t^{\bar q_1}\mL_z^{\bar{\bbp}_1}}&\lesssim_C(\s-\tau)^{-1}\|\bbone_{Q_2}b\|_{\mL_t^{q_1}\mL_z^{\bbp_1}}\|\bbone_{Q_\s}u\cI_t\|_{\mL_t^{\nu_1}\mL_z^{\bbr_1}}\\
&\lesssim_C (\s-\tau)^{-1}\|b\|_{\mL_t^{q_1}\mL_z^{\bbp_1}((-4,4)\times\mR^{2d})}\|\bbone_{Q_\s}u\cI_t\|_{\mL_t^{\nu_1}\mL_z^{\bbr_1}}.
\end{split}
\end{align}

{\bf{Step 2.}} We extend $b$ and $f$ by zero in time outside $(-4,4)$ and continue to denote the extension by $b$. By a standard continuity method, there exists a  global solution $w$ to
\begin{align}\label{df-cauchy-eq-new}
\p_sw-\Delta_vw-v\cdot\nn_xw-b\cdot\nn_vw=\div_vF+f_1+f_2+f_3,\quad w|_{s\leq-\bar\tau^2}=0.
\end{align}
Denote $$A_t^\b:=\|w\cI_t\|_{\mL_t^\infty\mL_z^2}+\|\nn_vw\cI_t\|_{\mL^2}+\|w\cI_t\|_{\mL_t^2\bB_{\bb2,\bba}^{\b}}.$$
We claim that
\begin{align}\label{df-cauchy-new}
A_t^\b\lesssim_C\|F\cI_t\|_{\mL^2}+\|f_1\cI_t\|_{\mL^2}+\|f_2\cI_t\|_{\mL_t^{\bar q_0}\mL_z^{\bar{\bbp}_0}}+\|f_3\cI_t\|_{\mL_t^{\bar q_1}\mL_z^{\bar{\bbp}_1}}.
\end{align}
By a standard approximation argument, it suffices to prove \eqref{df-cauchy-new} for smooth $b$, $F$, and $f_i$. Let $b^n$, $F^n$, and $f_i^n$ denote their standard space--time mollifications. Then $\div_v b^n=0$, and the relevant norm bounds hold uniformly in $n$. The estimate for the general case follows by passing to the limit as $n\to\infty$. Hence, in what follows, we assume that $b$, $F$, and $f_i$ are smooth.

Let $\chi\in C_c^\infty(\mR^{1+2d};[0,1])$ satisfy $\chi=1$ on $Q_1$ and $\chi=0$ on $Q_2^c$. For $R\geq1$, set
$$\chi_R(s,x,v):=\chi\left(\tfrac{s}{R^2},\tfrac{x}{R^3},\tfrac{v}{R}\right).$$
Then,
\begin{align}\label{bound}
\|\nn_v\chi_R\|_\infty\lesssim \tfrac1R,\quad\|\p_s\chi_R-v\cdot\nn_x\chi_R\|_\infty\lesssim \tfrac1{R^2}.
\end{align}

Fix $\theta\in(-\bar\tau^2,t]$. We use $w\chi_R^2$ as a test function in \eqref{df-cauchy-eq-new}. For the drift term, the divergence-free condition gives
$$\int_{\mR^{2d}}(b\cdot\nn_vw)w\chi_R^2\,\dif z=-\int_{\mR^{2d}}w^2\chi_R\,b\cdot\nn_v\chi_R\,\dif z.$$
Applying the Young's inequality, we obtain
\begin{align*}
&\|w(\theta)\chi_R(\theta)\|_{\mL_z^2}^2+\int_{-\bar\tau^2}^{\theta}\int_{\mR^{2d}}|\nn_vw|^2\chi_R^2\,\dif z\dif s\\
&\quad\lesssim\int_{-\bar\tau^2}^{\theta}\int_{\mR^{2d}}|F\cdot\nn_vw|\chi_R^2\,\dif z\dif s+\sum_{i=1}^3\int_{-\bar\tau^2}^{\theta}\int_{\mR^{2d}}|f_iw|\chi_R^2\,\dif z\dif s\\
&\qquad+\ff1{R^2}\|w\cI_t\|_{\mL^2}^2+\ff1R\|F\cI_t\|_{\mL^2}\|w\cI_t\|_{\mL^2}+\ff1R\|b\|_{\mL^\infty}\|w\cI_t\|_{\mL^2}^2,
\end{align*}
Letting $R\to\infty$, we obtain
\begin{align}\label{df-energy-w-new}
\|w\cI_t\|_{\mL_t^\infty\mL_z^2}^2+\|\nn_vw\cI_t\|_{\mL^2}^2\lesssim\int_{\mR^{1+2d}}|F\cdot\nn_vw\cI_t|\,\dif z\dif s+\sum_{i=1}^3\int_{\mR^{1+2d}}|f_iw\cI_t|\,\dif z\dif s.
\end{align}
By the H\"older's inequality and the Young's inequality,
$$\int |F\cdot\nn_vw\,\cI_t|\leq\|F\cI_t\|_{\mL^2}\|\nn_vw\cI_t\|_{\mL^2}\leq\frac14\|\nn_vw\cI_t\|_{\mL^2}^2+C\|F\cI_t\|_{\mL^2}^2.$$
Similarly, for $i=1$,
$$\int |f_1w\,\cI_t|\leq\|f_1\cI_t\|_{\mL^2}\|w\cI_t\|_{\mL^2}\lesssim\|f_1\cI_t\|_{\mL^2}\|w\cI_t\|_{\mL_t^\infty\mL_z^2}\leq\frac14\|w\cI_t\|_{\mL_t^\infty\mL_z^2}^2+C\|f_1\cI_t\|_{\mL^2}^2,$$
and for $i=2,3$, 
$$\int |f_2 w\,\cI_t|\leq\|f_2\cI_t\|_{\mL_t^{\bar q_0}\mL_z^{\bar{\bbp}_0}}\|w\cI_t\|_{\mL_t^{\nu_0}\mL_z^{\bbr_0}},\quad \int |f_3 w\,\cI_t|\leq\|f_3\cI_t\|_{\mL_t^{\bar q_1}\mL_z^{\bar{\bbp}_1}}\|w\cI_t\|_{\mL_t^{\nu_1}\mL_z^{\bbr_1}}.$$
Applying the Young's inequality and using the estimates above, we obtain the following energy estimate: for every $\eps\in(0,1)$,
\begin{align}\label{eq-energy}
\begin{split}
&\|w\cI_t\|_{\mL_t^\infty\mL_z^2}+\|\nn_vw\cI_t\|_{\mL^2}\\
&\qquad\lesssim_C\|F\cI_t\|_{\mL^2}+\|f_1\cI_t\|_{\mL^2}+\eps\sum_{i=0,1}\|w\cI_t\|_{\mL_t^{\nu_i}\mL_z^{\bbr_i}}+C_\eps(\|f_2\cI_t\|_{\mL_t^{\bar q_0}\mL_z^{\bar{\bbp}_0}}+\|f_3\cI_t\|_{\mL_t^{\bar q_1}\mL_z^{\bar{\bbp}_1}}).
\end{split}
\end{align}

We now estimate $\|w\cI_t\|_{\mL_t^2\bB_{\bb2,\bba}^{\b}}$. The  Duhamel's formula gives
$$w(s)=\int_{-\bar\tau^2}^sP_{s-r}\big(b\cdot\nn_vw+\div_vF+f_1+f_2+f_3\big)(r)\dif r=:J_b+J_F+J_1+J_2+J_3.$$
We apply Lemma \ref{semies} to the terms above with the parameter choices specified below.

For $J_F$, take $(\g,q,\nu,\bbp)=(-1,2,2,\bb2)$. The Bernstein inequality then gives
$$\|J_F\cI_t\|_{\mL_t^2\bB_{\bb2,\bba}^{\b}}\lesssim\|\div_vF\cI_t\|_{\mL_t^2\bB_{\bb2,\bba}^{-1}}\lesssim\|F\cI_t\|_{\mL^2}.$$
For $J_1$, taking $(\g,q,\nu,\bbp)=(0,2,2,\bb2)$ yields
$$\|J_1\cI_t\|_{\mL_t^2\bB_{\bb2,\bba}^{\b}}\lesssim\|f_1\cI_t\|_{\mL^2}.$$
For $J_2$ and $J_3$, set $\alpha_i:=\b+\bba\cdot(\frac d{\bar{\bbp}_i}-\frac d{\bb2})$, $i=0,1$. By the Bernstein embedding,
$$\|J_2\cI_t\|_{\mL_t^2\bB_{\bb2,\bba}^{\b}}\lesssim\|J_2\cI_t\|_{\mL_t^2\bB_{\bar{\bbp}_0,\bba}^{\alpha_0}},\quad \|J_3\cI_t\|_{\mL_t^2\bB_{\bb2,\bba}^{\b}}\lesssim\|J_3\cI_t\|_{\mL_t^2\bB_{\bar{\bbp}_1,\bba}^{\alpha_1}}.$$
Note that $\alpha_i=\b+\bba\cdot(\frac d{\bar{\bbp}_i}-\frac d{\bb2})<3-\frac2{\bar q_i}$. Thus taking $(\g,q,\nu,\bbp)=(0,\bar q_i,2,\bar{\bbp}_i)$ yields
$$\|J_2\cI_t\|_{\mL_t^2\bB_{\bb2,\bba}^{\b}}\lesssim_C\|f_2\cI_t\|_{\mL_t^{\bar q_0}\mL_z^{\bar{\bbp}_0}},\quad \|J_3\cI_t\|_{\mL_t^2\bB_{\bb2,\bba}^{\b}}\lesssim_C\|f_3\cI_t\|_{\mL_t^{\bar q_1}\mL_z^{\bar{\bbp}_1}}.$$

It remains to estimate $J_b$. Define $(\hat q,\hat{\bbp})$ by $\frac1{\hat q}:=\frac1{q_1}+\frac12$, $\frac1{\hat{\bbp}}:=\frac1{\bbp_1}+\frac1{\bb2}$. Then $\hat{\bbp}\in(1,2)^2$ and $\hat q\in(1,2)$. Set $\alpha_b:=\b+\bba\cdot(\frac d{\hat{\bbp}}-\frac d{\bb2})=\b+S_1<2-T_1=3-\frac2{\hat q}$. Hence, taking $(\g,q,\nu,\bbp)=(0,\hat q,2,\hat{\bbp})$,
\begin{align*}
\|J_b\cI_t\|_{\mL_t^2\bB_{\bb2,\bba}^{\b}}\lesssim\|J_b\cI_t\|_{\mL_t^2\bB_{\hat{\bbp},\bba}^{\alpha_b}}\lesssim\|b\cdot\nn_vw\,\cI_t\|_{\mL_t^{\hat q}\mL_z^{\hat{\bbp}}}.
\end{align*}
By the H\"older's inequality and {\bf ($\wt{\text{H}}$)},
\begin{align}\label{df-Jb-new}
\|b\cdot\nn_vw\,\cI_t\|_{\mL_t^{\hat q}\mL_z^{\hat{\bbp}}}\leq\|b\cI_t\|_{\mL_t^{q_1}\mL_z^{\bbp_1}((-4,4)\times\mR^{2d})}\|\nn_vw\cI_t\|_{\mL^2}.
\end{align}
Combining the above estimates with \eqref{eq-energy} yields, for any $\eps\in(0,1)$,
$$A_t^\b\lesssim_C\|F\cI_t\|_{\mL^2}+\|f_1\cI_t\|_{\mL^2}+\eps\sum_{i=0,1}\|w\cI_t\|_{\mL_t^{\nu_i}\mL_z^{\bbr_i}}+C_\eps(\|f_2\cI_t\|_{\mL_t^{\bar q_0}\mL_z^{\bar{\bbp}_0}}+\|f_3\cI_t\|_{\mL_t^{\bar q_1}\mL_z^{\bar{\bbp}_1}}).$$
Since $(\nu_i,\bbr_i)\in\sI_\b$, Lemma \ref{interpolation} gives
$$\|w\cI_t\|_{\mL_t^{\nu_i}\mL_z^{\bbr_i}}\lesssim_C\|w\cI_t\|_{\mL_t^\infty\mL_z^2}+\|w\cI_t\|_{\mL_t^2\bB_{\bb2,\bba}^{\b}}\lesssim_C A_t^\b,\quad i=0,1.$$
Taking $\eps>0$ sufficiently small, we obtain \eqref{df-cauchy-new}.

Combining \eqref{df-cauchy-new} with \eqref{df-Ff1-new}, \eqref{df-f2-new} and \eqref{df-f3-new}, we get
\begin{align}\label{df-At-new}
(\s-\tau)^2 A_t^\b\lesssim_C\sum_{i=0,1}\|\bbone_{Q_\s}u\cI_t\|_{\mL_t^{\nu_i}\mL_z^{\bbr_i}}+\|\bbone_{Q_2}f\cI_t\|_{\mL_t^{q_0}\mL_z^{\bbp_0}}\|\bbone_{\{u>0\}\cap Q_\s}\cI_t\|_{\mL_t^{\nu_0}\mL_z^{\bbr_0}}.
\end{align}

{\bf{Step 3.}} Set $\bar w:=u\eta_0-w$. Then in the sense of distributions,
\begin{align}\label{Kbbarw}
\p_s\bar w-\Delta_v\bar w-v\cdot\nn_x\bar w-b\cdot\nn_v\bar w\leq0,\quad\bar w|_{s\leq-\bar\tau^2}=0.
\end{align}
Since $u\geq0$, we have $0\leq\bar w^+\leq u\eta_0+|w|$. Hence, by Lemma \ref{interpolation} and \eqref{df-cauchy-new},
$$\|\bar w^+\cI_t\|_{\mL_t^{\nu_1}\mL_z^{\bbr_1}}\leq\|u\eta_0\cI_t\|_{\mL_t^{\nu_1}\mL_z^{\bbr_1}}+\|w\cI_t\|_{\mL_t^{\nu_1}\mL_z^{\bbr_1}}\lesssim_C\|u\eta_0\cI_t\|_{\mL_t^{\nu_1}\mL_z^{\bbr_1}}+A_t^\b<\infty.$$

For $M>0$, set
$$T_M(r):=r^+\wedge M,\quad \Phi_M(r):=\int_0^rT_M(\rho)\dif\rho.$$
By a standard regularization argument, testing \eqref{Kbbarw} against $T_M(\bar w)\chi_R^2\mathcal I_\theta$ yields
\begin{align*}
&\int_{\mathbb R^{2d}}\Phi_M(\bar w(\theta))\chi_R^2(\theta)
+\int T_M'(\bar w)|\nabla_v\bar w|^2\chi_R^2\mathcal I_\theta
\\
&\qquad\lesssim
\int \Phi_M(\bar w)
\left(
|(\partial_s-v\cdot\nabla_x)\chi_R^2|
+|\Delta_v\chi_R^2|
\right)\mathcal I_\theta
+\int \Phi_M(\bar w)
|b\cdot\nabla_v(\chi_R^2)|\mathcal I_\theta.
\end{align*}
Using the fact that $0\leq\Phi_M(r)\leq (r^+)^2/2$, together with \eqref{bound} and H\"older's inequality, we obtain, for almost every $\theta\leq t$, 
\begin{align*}
\int_{\mR^{2d}}\Phi_M(\bar w(\theta))\chi_R^2+\int T_M'(\bar w)|\nn_v\bar w|^2\chi_R^2\cI_\theta\lesssim \ff1{R^2}\|\bar w^+\cI_t\|_{\mL^2}^2+\ff1R\|b\|_{\mL_s^{q_1}\mL_z^{\bbp_1}((-4,4)\times\mR^{2d})}\|\bar w^+\cI_t\|_{\mL_s^{\nu_1}\mL_z^{\bbr_1}}^2.
\end{align*}
Letting first $R\to\infty$ and then $M\to\infty$, we get
$$\frac12\|\bar w^+(\theta)\|_{\mL_z^2}^2+\int_{-\infty}^{\theta}\|\nn_v\bar w^+(s)\|_{\mL_z^2}^2\dif s\leq0.$$
Hence $\bar w^+\cI_t=0$, and therefore $u\eta_0\leq w$.

For any $(\nu,\bbr)\in\sI_\b$, Lemma \ref{interpolation} yields
$$\|\bbone_{Q_\tau}u\cI_t\|_{\mL_t^\nu\mL_z^\bbr}\leq\|w\cI_t\|_{\mL_t^\nu\mL_z^\bbr}\lesssim_C A_t^\b.$$
Together with \eqref{df-At-new}, this proves \eqref{df-DG-new}. Hence, $u\in\cD\cG_{\sI_\b}^+(\wt Q_\cdot^t)$. By Theorem \ref{degiorgi}, we obtain
\begin{align}\label{df-linfty-new}
\|\bbone_{Q_{3/2}}u^+\cI_t\|_{\mL^\infty}\lesssim_C\|\bbone_{Q_2}u^+\cI_t\|_{\mL^2}+\|\bbone_{Q_2}f\cI_t\|_{\mL_t^{q_0}\mL_z^{\bbp_0}}.
\end{align}
It remains to estimate $\nn_vu^+$. Let $\chi\in C_c^\infty(Q_{3/2};[0,1])$ satisfy $\chi=1$ on $Q_1$. Applying Lemma \ref{weaksubsollemma} (i) to $u^+$ and then using the H\"older's inequality, we obtain
$$\|\nn_v(u^+\chi)\cI_t\|_{\mL^2}\lesssim_C\|\bbone_{Q_{3/2}}u^+\cI_t\|_{\mL_t^{\nu_1}\mL_z^{\bbr_1}}+\|\<\!\<f\chi,u^+\chi\>\!\>\cI_t\|_{\mL_t^1}^{1/2}.$$
By \eqref{df-linfty-new},
$$\|\bbone_{Q_{3/2}}u^+\cI_t\|_{\mL_t^{\nu_1}\mL_z^{\bbr_1}}\lesssim_C\|\bbone_{Q_{3/2}}u^+\cI_t\|_{\mL^\infty}\lesssim_C\|\bbone_{Q_2}u^+\cI_t\|_{\mL^2}+\|\bbone_{Q_2}f\cI_t\|_{\mL_t^{q_0}\mL_z^{\bbp_0}}.$$
Moreover,
$$\|\<\!\<f\chi,u^+\chi\>\!\>\cI_t\|_{\mL_t^1}\leq\|\bbone_{Q_2}f\cI_t\|_{\mL^1}\|\bbone_{Q_{3/2}}u^+\cI_t\|_{\mL^\infty}\lesssim\|\bbone_{Q_2}f\cI_t\|_{\mL_t^{q_0}\mL_z^{\bbp_0}}\|\bbone_{Q_{3/2}}u^+\cI_t\|_{\mL^\infty}.$$
By \eqref{df-linfty-new} and the Young's inequality, we obtain
$$\|\<\!\<f\chi,u^+\chi\>\!\>\cI_t\|_{\mL_t^1}^{1/2}\lesssim_C\|\bbone_{Q_2}u^+\cI_t\|_{\mL^2}+\|\bbone_{Q_2}f\cI_t\|_{\mL_t^{q_0}\mL_z^{\bbp_0}}.$$
Since $\chi=1$ on $Q_1$,
$$\|\bbone_{Q_1}\nn_vu^+\cI_t\|_{\mL^2}\lesssim_C\|\bbone_{Q_2}u^+\cI_t\|_{\mL^2}+\|\bbone_{Q_2}f\cI_t\|_{\mL_t^{q_0}\mL_z^{\bbp_0}}.$$
Combining this with \eqref{df-linfty-new} proves \eqref{df43-estimate}.
\end{proof}

\section{Global boundedness and well-posedness of weak solutions}\label{sec:global-wellposedness}

In this section, we use the local estimate established in Theorem \ref{thm:df-thm43} to prove global boundedness and well-posedness for the Cauchy problem. We first introduce the notation needed to apply this estimate uniformly in space and time.

Fix $T>0$. For $t_0\in[0,T]$, $z_0=(x_0,v_0)\in\mR^{2d}$ and a function $g$ on $\mR^{1+2d}$, we define the Galilean translation compatible with \eqref{kineticpde} by
\begin{align}\label{df-galilean-shift}
g^{t_0,z_0}(s,x,v):=g(t_0+s,x+x_0-sv_0,v+v_0)
\end{align}
and the localized kinetic energy space by
$$\wt\sV_T:=\big\{g\in\mL_{\rm loc}^1:\nor g\nor_{\wt\sV_T}:=\sup_{t_0\in[0,T],\,z_0\in\mR^{2d}}\big(\|\bbone_{Q_1}g^{t_0,z_0}\|_{\mL_s^\infty\mL_z^2}+\|\bbone_{Q_1}\nn_vg^{t_0,z_0}\|_{\mL^2}\big)<\infty\big\}.$$
Replacing $Q_1$ by any fixed $Q_r$ yields an equivalent norm.

For $t_0\in[0,T]$, the time projection of $Q_2(t_0,z_0)$ is contained in $(-4,T+4)$. Hence, the drift norm required in the local estimate is bounded by $\|b\cI_{T+4}\|_{\mL_t^{q_1}\mL_z^{\bbp_1}}$, yielding the following result.

\begin{theorem}\label{thm:df-global}
Under Assumption {\bf (${\text{H}}$)}, the Cauchy problem \eqref{kineticpde} with $u|_{t\leq0}=0$ admits a unique global weak solution $u$ in the sense of Definition \ref{weaksubsol}, satisfying
\begin{align}\label{unicondition}
\|u\cI_T\|_{\mL^\infty}+\nor u\cI_T\nor_{\wt\sV_T}<\infty,\quad T>0.
\end{align}
Moreover, for every $T>0$, there exists a constant $C_T=C(T,d,q_1,\bbp_1,q_0,\bbp_0,\b,\|b\cI_{T+4}\|_{\mL_t^{q_1}\mL_z^{\bbp_1}})>0$ such that
\begin{align}\label{df-global-estimate}
\|u\cI_t\|_{\mL^\infty}+\nor u\cI_t\nor_{\wt\sV_T}\lesssim_{C_T}\|f\cI_t\|_{\mL_t^{q_0}\mL_z^{\bbp_0}},\quad t\in[0,T].
\end{align}
\end{theorem}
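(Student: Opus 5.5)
The plan has three parts: an a priori estimate for smooth data, existence by approximation, and uniqueness by applying the same estimate to the difference of two solutions. All three run through Theorem \ref{thm:df-thm43}, applied in Galilean-shifted cylinders.

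\textbf{A priori estimate.} Let $u$ be a weak solution with \eqref{unicondition}, and fix $t_0\in[0,T]$, $z_0\in\mR^{2d}$. The shift \eqref{df-galilean-shift} preserves the equation: $u^{t_0,z_0}$ solves \eqref{kineticpde} with $b^{t_0,z_0}$ and $f^{t_0,z_0}$ in place of $b,f$. The shifted drift still satisfies $\div_v b^{t_0,z_0}=0$, and its mixed norm on $(-4,4)$ is bounded by $\|b\cI_{T+4}\|_{\mL_t^{q_1}\mL_z^{\bbp_1}}$, because the map $(x,v)\mapsto(x+x_0-sv_0,v+v_0)$ has Jacobian one and is a pure translation in $x$ for each fixed $v$, so it preserves $L^{\bbp}$ norms. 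Applying Theorem \ref{thm:df-thm43} to $\pm u^{t_0,z_0}$ gives
$$\|\bbone_{Q_1}u^{t_0,z_0}\cI_{0}\|_{\mL^\infty}+\|\bbone_{Q_1}\nn_v u^{t_0,z_0}\cI_0\|_{\mL^2}\lesssim \|\bbone_{Q_2}u^{t_0,z_0}\cI_0\|_{\mL^2}+\|f\cI_{t_0}\|_{\mL_t^{q_0}\mL_z^{\bbp_0}},$$
with the analogous bound at intermediate times. Here the first term on the right is bounded by the uniformly local energy norm on a slightly earlier time window, so the estimate is local in time.

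\textbf{Closing the Gronwall-type loop (main obstacle).} To close, I will derive a localized energy inequality with a Galilean-shifted cutoff $\eta$, using Lemma \ref{weaksubsollemma}(i) in its equality version for solutions. This gives
$$\|\bbone_{Q_2}u^{t_0,z_0}(s)\|_{\mL_z^2}^2\lesssim \int_{t_0-4}^{s}\sup_{z_0'}\|\bbone_{Q_3}u^{\cdot,z_0'}\|^2_{\mL_z^2}\,\dif r+\text{($f$-terms)}.$$
The drift enters only through $u^2 b\cdot\nn_v\eta$. Since $u$ is bounded by \eqref{unicondition}, I estimate this term by $\|u\|_{\infty}\|b\|\,\|u\|_{\mL^{\nu_1}\mL^{\bbr_1}}$ on unit cylinders, or better by Hölder in the $(\nu_1,\bbr_1)$ exponents. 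Covering $Q_3$ by boundedly many $Q_1$'s reduces everything to $\wt\sV$-type quantities. With $\phi(t):=\sup_{z_0}$ of the local energy up to time $t$, and the $L^\infty$ bound controlled by the local $L^2$, I expect an inequality of the form
$$\phi(t)\le C\int_0^t\phi+C\|f\cI_t\|^2 .$$
Gronwall then yields \eqref{df-global-estimate}, using $u=0$ for $t\le0$. The delicate point is making the lower-order drift term absorbable in this uniform-in-$z_0$ sense without any smallness of $b$. I will first try to split time into short intervals on which $\|b\|_{\mL^{q_1}_t}$ is small, which works because $q_1<\infty$, and iterate.

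\textbf{Existence and uniqueness.} For existence, mollify $b$ (preserving $\div_v b=0$) and $f$, solve classically, and note that the bounds above are uniform in the mollification. Weak compactness in $\sV_{\rm loc}$ and weak-$*$ compactness in $\mL^\infty$ give a limit $u$. Passing to the limit in $\int (b_n\cdot\nn_v u_n)\varphi=-\int u_n b_n\cdot\nn_v\varphi$ requires strong $L^2_{\rm loc}$ convergence of $u_n$. I obtain it from kinetic averaging/hypoelliptic compactness (the $\bB^{\b}_{\bb2,\bba}$ bound on localized $u_n$ from Step 2 of Theorem \ref{thm:df-thm43}) together with $b_n\to b$ in $L^{q_1}L^{\bbp_1}$. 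For uniqueness, the difference of two solutions in the class \eqref{unicondition} solves the equation with $f=0$, so \eqref{df-global-estimate} forces it to vanish.
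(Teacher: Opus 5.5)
Your outline is the paper's proof: Galilean shifts, Theorem \ref{thm:df-thm43} applied to $\pm u^{t_0,z_0}$ with the drift norm controlled by $\|b\cI_{T+4}\|_{\mL_t^{q_1}\mL_z^{\bbp_1}}$, Gronwall for the uniformly local $L^2$ norm, mollification for existence, and uniqueness from the difference with $f=0$. However, the step you call the main obstacle is not one, and the energy-inequality detour is unnecessary.

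Take $t_0=t$ and $r=0$ in the local estimate. The square of its left-hand side dominates $\|\bbone_{B_1^\bba(z_0)}u(t)\|_{\mL_z^2}^2$ up to a constant. The quantity $\|\bbone_{Q_2}u^{t,z_0}\cI_0\|_{\mL^2}^2$ is \emph{exactly} the integral over $s\in(t-4,t]$ of the squared $L^2$ norm of $u(s)$ over a set that, for each fixed $s$, is a translate of $\{|x|<8,|v|<2\}$. That set is covered by a fixed number of balls $B_1^\bba$. Since $u=0$ for $s\leq0$, this gives directly $\Phi(t)\lesssim\int_0^t\Phi(s)\,\dif s+\|f\cI_t\|_{\mL_t^{q_0}\mL_z^{\bbp_0}}^2$, where $\Phi(t):=\sup_{z_0}\|\bbone_{B_1^\bba(z_0)}u(t)\|_{\mL_z^2}^2$. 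The right-hand side of Theorem \ref{thm:df-thm43} is a space-time integral, not a supremum in time, and that is precisely the Gronwall structure.

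The drift enters only through the constant of Theorem \ref{thm:df-thm43}. That constant depends on the size of $\|b\|_{\mL_t^{q_1}\mL_z^{\bbp_1}}$ but needs no smallness, so there is no term $u^2b\cdot\nn_v\eta$ to absorb and no reason to split time. Your route does close once you bound the drift term by the quantitative local bound $\|u\|_{L^\infty}^2\lesssim\int\Phi+\|f\|^2$. At that point, though, the energy inequality is redundant, since the same bound already controls $\Phi(t)$. The variant using only the qualitative $\|u\|_\infty<\infty$ from \eqref{unicondition} does not close. It produces a term like $\|u\|_\infty\int_0^t\|b(s)\|\,\Phi(s)^{1/2}\,\dif s$, which gives no bound in terms of $f$ alone and does not force $\Phi\equiv0$ when $f=0$.

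In the existence step, you claim strong $L^2_{\rm loc}$ convergence of $u_n$ is needed; it is not. The weak formulation in Definition \ref{weaksubsol} contains $\int(b\cdot\nn_vu)\varphi$. Because $q_1>2$ and $\bbp_1\in(2,\infty)^2$, the mollified drifts satisfy $b_n\to b$ in $\mL^2_{\rm loc}$. Combined with $\nn_vu_n\rightharpoonup\nn_vu$ weakly in $\mL^2_{\rm loc}$, which comes from the uniform $\wt\sV_T$ bound, this weak--strong pairing passes to the limit, exactly as in the paper. Even in your rewritten form $-\int u_nb_n\cdot\nn_v\varphi$, weak-$*$ convergence of $u_n$ in $\mL^\infty$ against strong $\mL^1$ convergence of $b_n\cdot\nn_v\varphi$ is enough.

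So the kinetic averaging/hypoelliptic compactness step can be dropped. As sketched, it would also need more than you cite. Step 2 of Theorem \ref{thm:df-thm43} gives the Besov bound for the auxiliary function $w$, which only dominates $u\eta_0$, and it gives no time regularity.
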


\begin{proof}
We divide the proof into two steps.

{\bf Step 1.} We first prove the a priori estimate and uniqueness. Fix $T>0$ and let $u$ be a global weak solution satisfying \eqref{unicondition}. For $t_0\in[0,T]$ and $z_0\in\mR^{2d}$, define $u^{t_0,z_0}$, $b^{t_0,z_0}$ and $f^{t_0,z_0}$ by \eqref{df-galilean-shift}. A direct calculation shows that
\begin{align*}
\p_t u^{t_0,z_0}=\Delta_vu^{t_0,z_0}+v\cdot\nn_xu^{t_0,z_0}+b^{t_0,z_0}\cdot\nn_vu^{t_0,z_0}+f^{t_0,z_0}.
\end{align*}
Moreover, $\div_vb^{t_0,z_0}=0$ and $\|b^{t_0,z_0}\|_{\mL_s^{q_1}\mL_z^{\bbp_1}((-4,4)\times\mR^{2d})}\leq\|b\cI_{T+4}\|_{\mL_t^{q_1}\mL_z^{\bbp_1}}$. Applying Theorem \ref{thm:df-thm43} to $\pm u^{t_0,z_0}$, we obtain
\begin{align}\label{df-global-local-estimate}
\|\bbone_{Q_1}u^{t_0,z_0}\cI_r\|_{\mL^\infty}+\|\bbone_{Q_1}\nn_vu^{t_0,z_0}\cI_r\|_{\mL^2}\lesssim_{C_T}\|\bbone_{Q_2}u^{t_0,z_0}\cI_r\|_{\mL^2}+\|f\cI_{t_0+r}\|_{\mL_t^{q_0}\mL_z^{\bbp_0}}.
\end{align}

Taking $t_0=t$ and $r=0$ in \eqref{df-global-local-estimate}, and covering $Q_2$ by finitely many translates of $B_1^\bba$, we obtain
\begin{align}\label{df-global-gronwall}
\sup_{z_0\in\mR^{2d}}\|\bbone_{B_1^\bba(z_0)}u(t)\|_{\mL_z^2}^2\lesssim_{C_T}\int_0^t\sup_{z_0\in\mR^{2d}}
\|\bbone_{B_1^\bba(z_0)}u(s)\|_{\mL_z^2}^2\dif s+\|f\cI_t\|_{\mL_t^{q_0}\mL_z^{\bbp_0}}^2,\quad t\in[0,T].
\end{align}
Indeed, for each fixed $s$, the map $z\mapsto(x+x_0-sv_0,v+v_0)$ is a translation, and the number of balls in the above covering is independent of $s$ and $z_0$. Hence, the Gronwall inequality yields
\begin{align}\label{df-global-uloc-l2}
\sup_{s\in[0,t]}\sup_{z_0\in\mR^{2d}}\|\bbone_{B_1^\bba(z_0)}u(s)\|_{\mL_z^2}\lesssim_{C_T}\|f\cI_t\|_{\mL_t^{q_0}\mL_z^{\bbp_0}},\quad t\in[0,T].
\end{align}
Substituting this estimate into \eqref{df-global-local-estimate}, taking the supremum over $t_0$ and $z_0$, and using a finite covering once more, we obtain \eqref{df-global-estimate}. 

If $u_1$ and $u_2$ are two global weak solutions satisfying \eqref{unicondition}, then $w:=u_1-u_2$ satisfies the same equation with $f=0$ and $w|_{t\leq0}=0$. Applying \eqref{df-global-gronwall} to $w$ gives $\sup_{z_0\in\mR^{2d}}\|\bbone_{B_1^\bba(z_0)}w(t)\|_{\mL_z^2}^2\equiv0$, and hence $u_1=u_2$.

{\bf Step 2.} We now prove the existence. As above, let $b^n,f^n\in C_b^\infty$ be smooth approximations of $b$ and $f$, respectively. Then $b^n$ remains divergence-free, $b^n\to b$ in $\mL_{\rm loc}^2$, and $f^n\to f$ in $\mL_{\rm loc}^1$. Moreover,
$$\sup_n\big(\|b^n\|_{\mL_t^{q_1}\mL_z^{\bbp_1}((-4,T+4)\times\mR^{2d})}+\|f^n\cI_T\|_{\mL_t^{q_0}\mL_z^{\bbp_0}}\big)\lesssim\|b\cI_{T+4}\|_{\mL_t^{q_1}\mL_z^{\bbp_1}}+\|f\cI_T\|_{\mL_t^{q_0}\mL_z^{\bbp_0}}.$$
For each $n$, the smooth Cauchy problem
\begin{align}\label{df-global-approx}
\p_tu^n=\Delta_vu^n+v\cdot\nn_xu^n+b^n\cdot\nn_vu^n+f^n,\quad u^n|_{t\leq0}=0,
\end{align}
has a classical solution. By Step 1,
\begin{align*}
\sup_n\big(\|u^n\cI_T\|_{\mL^\infty}+\nor u^n\cI_T\nor_{\wt\sV_T}\big)<\infty.
\end{align*}
Thus, by a standard weak convergence argument, there are a subsequence, still denoted by $u^n$, and a function $u$ such that, for every bounded domain $Q\subset(-\infty,T)\times\mR^{2d}$,
\begin{align*}
u^n\rightharpoonup u\hbox{ in }\mL^\infty(Q),\quad\nn_vu^n\rightharpoonup\nn_vu\hbox{ in }\mL^2(Q).
\end{align*}

It suffices to pass the limit in the drift term. Note that $b^n\to b$ strongly in $\mL^2(Q)$. Hence, for every $\varphi\in C_c^\infty(Q)$,
\begin{align*}
\Big|\int_Q((b^n-b)\cdot\nn_vu^n)\varphi\Big|\leq\|\bbone_Q(b^n-b)\|_{\mL^2}\|\bbone_Q\nn_vu^n\|_{\mL^2}\|\varphi\|_\infty\to0,
\end{align*}
while the weak convergence of $\nn_vu^n$ gives
\begin{align*}
\int_Q b\cdot\nn_v(u^n-u)\varphi\to0.
\end{align*}
Passing to the limit in the weak formulation shows that $u$ is a weak solution with $u|_{t\leq0}=0$, while \eqref{df-global-estimate} follows from weak lower semicontinuity.

Since $T>0$ is arbitrary, uniqueness guarantees the consistency of these solutions, which therefore define a global weak solution on $\mR_+$.
\end{proof}

\end{document}